\newtheorem{thm}{Theorem}[section]
\newtheorem{crl}[thm]{Corolary}
\newtheorem{lema}[thm]{Lemma}
\newtheorem{rmk}[thm]{Remark}
\newcommand{\F}{\mathbb{F}}
\title{Left ideals in matrix rings over finite fields}
\author{R. A. Ferraz}
\address{Instituto de Matem\'atica e Estat\'istica, Universidade de S\~ao Paulo, Caixa Postal 66281, CEP-05315-970, S\~ao Paulo,
 Brazil.}  \email{raul@ime.usp.br}
\author{C. Polcino Milies}
\address{Instituto de Matem\'atica e Estat\'istica, Universidade de S\~ao Paulo, Caixa Postal 66281, CEP-05315-970, and Universidade Federal do ABC, CEP 09210-580, S\~ao Paulo,
 Brazil.}  \email{polcino@ime.usp.br and polcino@ufabc.edu.br}
\author{E. Taufer}
\address{Instituto de Matem\'atica, Estat\'istica e F\'isica, Universidade Federal do Rio Grande, Campus Carreros. Av. It\'alia km 8, CEP 96201-900 RS, Brazil}\email{editetaufer@furg.br}
\begin{document}
 
\thanks{
The second author was partially supported by CNPq., Proc. 300243/79-0(RN) and FAPESP, Proc 2015/09162-9.}

\begin{abstract}
It is well-known that each left ideals in a matrix rings over a finite field is generated by an idempotent matrix. In this work we compute the number of left ideals in these rings, the number of different idempotents generating each left ideal, and give explicitly a set of idempotent generators of all left ideals of a given rank.   \end{abstract}

\maketitle

\section{Introduction}

Let $\F_q$ be the field with $q$ elements and denote by $M_n(\F_q)$ full ring of $n \times n$ matrices over $\F_q$. Given $M \in M_n(\F_q)$, the subspace of $\F_q^n$ generated by the rows of $M$, whose dimension is precisely the rank of  $M$, will be denoted by $W(M)$. Since $ M_n (\F_q)$ is semi-simple,  every left ideal is generated by just one element. In particular, every left ideal has a generator that is an idempotent. 

The left ideal generated by a matrix $M$ will be denoted by $\langle M \rangle $.
Notice that, since $rank(AM) \leq rank(M)$ for all $A\in M_n(\F_q)$ we have that
$$rank(M) = sup\{rank(XM) \;|\; X\in M_n(\F_q)\}.$$

Consequently, two generators of the same ideal  are of the same rank. For an ideal $J = \langle M \rangle $ we define $rank(J)=rank(M)$.

Results obtained here refer to left ideals, but similar results hold for right ideals.

\section{Ideals and Subspaces}

We intend to count the number of left ideals in $M_n(\F_q)$. To do so,  we will establish a correspondence of these ideals with subspaces of $\F_q^n$.
We begin with the following.

\begin{lema}  \label{combLinear}Given $ M_1, M_2 \in M_n(\F_q)$, then $ \langle M_1\rangle \subset \langle M_2 \rangle $  if, and only if, the rows of $M_1$ are linear combinations of the rows of $M_2$ .
\end{lema}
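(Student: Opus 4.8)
The plan is to prove the two implications separately, translating "left ideal containment" into a purely linear-algebraic statement about row spaces, i.e. into the inclusion $W(M_1) \subseteq W(M_2)$.

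For the forward direction, suppose $\langle M_1 \rangle \subseteq \langle M_2 \rangle$. Then in particular $M_1 \in \langle M_2 \rangle$, so $M_1 = A M_2$ for some $A \in M_n(\F_q)$. The key observation is that the $i$-th row of a product $A M_2$ is the linear combination of the rows of $M_2$ whose coefficients are the entries of the $i$-th row of $A$; concretely, if $e_i^{\,t}$ denotes the $i$-th standard row vector, then $e_i^{\,t} M_1 = (e_i^{\,t} A) M_2$, and $e_i^{\,t} A$ is a row vector of scalars. Hence every row of $M_1$ is a linear combination of the rows of $M_2$, which is exactly $W(M_1) \subseteq W(M_2)$.

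For the converse, assume every row of $M_1$ is a linear combination of the rows of $M_2$. Reading the computation above backwards, for each $i$ there is a scalar row vector $a_i^{\,t}$ with $e_i^{\,t} M_1 = a_i^{\,t} M_2$; stacking the $a_i^{\,t}$ as the rows of a matrix $A \in M_n(\F_q)$ gives $M_1 = A M_2$, so $M_1 \in \langle M_2 \rangle$. It then follows that $\langle M_1 \rangle = M_n(\F_q) M_1 \subseteq M_n(\F_q) M_2 = \langle M_2 \rangle$, since a left ideal is closed under left multiplication and $\langle M_2\rangle$ already contains $M_1$.

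There is no real obstacle here; the only thing to be careful about is the bookkeeping convention that left multiplication by $A$ produces \emph{row} combinations (with the rows of $A$ supplying the coefficients), which is what makes the row space $W(M)$ — rather than the column space — the right invariant to attach to the left ideal $\langle M \rangle$. One could also phrase the argument slightly more structurally: the map $X \mapsto XM$ has image contained in $\{Y : W(Y) \subseteq W(M)\}$, and surjectivity onto it is obtained by the row-stacking construction above; but the elementary computation with the vectors $e_i^{\,t}$ is the cleanest route and needs no extra machinery.
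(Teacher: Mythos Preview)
Your proof is correct and follows essentially the same approach as the paper: both directions hinge on the equivalence $\langle M_1\rangle\subset\langle M_2\rangle \Leftrightarrow M_1=AM_2$ for some $A$, and on the observation that the rows of $AM_2$ are linear combinations of the rows of $M_2$ with coefficients given by the rows of $A$. The only cosmetic difference is that the paper writes the computation entrywise, while you phrase it via the standard row vectors $e_i^{\,t}$; your converse is also spelled out more explicitly than the paper's ``follows immediately.''
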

\begin{proof}

Given  $M_1=(a_{ij}), M_2=(b_{ij}) \in M_n(\F_q)$,  note that  $ \langle M_1\rangle \subset \langle M_2 \rangle $, and if only if, $M_1 \in \langle M_2 \rangle$, that is if there exists $(c_{ij}) \in  M_n(\F_q)$, such that $ M_1=(c_{ij})M_2.$ 

Hence, $a_{ij}=\sum_{k=1}^n c_{ik}b_{kj}$, $\forall i, j$ and thus, the $l$-th row of $M_1$ is

\begin{eqnarray*}
(a_{l1}, a_{l2}, ..., a_{ln})  &=& \left(\sum_{k=1}^n c_{lk}b_{k1}, \sum_{k=1}^n c_{lk}b_{k2}, ..., \sum_{k=1}^n c_{lk}b_{kn}\right)\\
                               &=& \sum_{k=1}^n c_{lk}(b_{k1}, b_{k2}, ..., b_{kn})\\
\end{eqnarray*}
 
The converse follows immediately.
\end{proof}

As an   consequence, we have 

\begin{crl}  \label{Igualsubs}$$\langle M_1 \rangle \subsetneqq \langle M_2 \rangle \Leftrightarrow  W(M_1) \subsetneqq W(M_2)$$ $$ \langle M_1 \rangle = \langle M_2 \rangle \Leftrightarrow W(M_1) = W(M_2).$$  
\end{crl}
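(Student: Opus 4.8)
The plan is to read everything off Lemma \ref{combLinear}. The first observation is simply that $W(M)$ is by definition the subspace spanned by the rows of $M$, so the hypothesis ``the rows of $M_1$ are linear combinations of the rows of $M_2$'' appearing in Lemma \ref{combLinear} is literally the assertion $W(M_1) \subseteq W(M_2)$. Thus the lemma already supplies the non-strict equivalence
$$\langle M_1 \rangle \subseteq \langle M_2 \rangle \Leftrightarrow W(M_1) \subseteq W(M_2).$$

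Next I would dispose of the equality statement. We have $\langle M_1 \rangle = \langle M_2 \rangle$ exactly when both $\langle M_1 \rangle \subseteq \langle M_2 \rangle$ and $\langle M_2 \rangle \subseteq \langle M_1 \rangle$ hold; applying the equivalence above to each of these two inclusions turns this into ``$W(M_1) \subseteq W(M_2)$ and $W(M_2) \subseteq W(M_1)$'', i.e. into $W(M_1) = W(M_2)$. That settles the second displayed line of the corollary.

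Finally, for the strict containment I would write $\langle M_1 \rangle \subsetneqq \langle M_2 \rangle$ as the conjunction of $\langle M_1 \rangle \subseteq \langle M_2 \rangle$ with $\langle M_1 \rangle \neq \langle M_2 \rangle$, and then translate the first clause via the non-strict equivalence and the second clause via the (contrapositive of the) equality statement just proved, obtaining ``$W(M_1) \subseteq W(M_2)$ and $W(M_1) \neq W(M_2)$'', which is precisely $W(M_1) \subsetneqq W(M_2)$; all implications are reversible, giving the equivalence. There is no substantial obstacle in this argument — the only thing to be slightly careful about is that Lemma \ref{combLinear} is phrased for non-strict inclusion, so the strict version is not read off directly but is obtained by combining the inclusion equivalence with the equality equivalence.
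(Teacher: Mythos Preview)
Your proposal is correct and matches the paper's approach: the paper itself gives no explicit proof, simply stating the corollary as a direct consequence of Lemma~\ref{combLinear}, which is exactly what you have spelled out in detail.
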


Since every subspace of $\F_q^n$ is of the form $W(M)$, for some $M\in M_n(\F_q)$, we have the following.

\begin{thm} \label{SubspaceIdeal} Given a positive integer $k$, $1 \leq k \leq n-1$, the  map $W(N_i) \mapsto \langle N_i\rangle$ gives a bijection $\varphi$  between the set of subspaces of dimension k of  $\F_q^n$, and the set of left ideals of rank k in $M_n(\F_q)$.
\end{thm}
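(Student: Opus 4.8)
The plan is to derive everything from Corollary~\ref{Igualsubs} together with the two observations already recorded in the text: that $rank(M)=\dim W(M)$ for every $M$, and that every subspace of $\F_q^n$ occurs as $W(M)$ for some $M\in M_n(\F_q)$. First I would check that $\varphi$ is well defined. Given a subspace $U\subseteq\F_q^n$ of dimension $k$, choose any $M$ with $W(M)=U$ and set $\varphi(U)=\langle M\rangle$. If $M_1,M_2$ both satisfy $W(M_1)=W(M_2)=U$, then the equality clause of Corollary~\ref{Igualsubs} gives $\langle M_1\rangle=\langle M_2\rangle$, so the value does not depend on the chosen representative.

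Next I would verify that $\varphi$ actually maps into the stated target set. If $\dim U=k$ and $U=W(M)$, then $rank(\langle M\rangle)=rank(M)=\dim W(M)=k$, so $\varphi(U)=\langle M\rangle$ is a left ideal of rank $k$. For injectivity, suppose $\varphi(W(M_1))=\varphi(W(M_2))$, i.e. $\langle M_1\rangle=\langle M_2\rangle$; the equality clause of Corollary~\ref{Igualsubs} again yields $W(M_1)=W(M_2)$, so the two subspaces coincide. For surjectivity, let $J$ be a left ideal of rank $k$. Since $M_n(\F_q)$ is semisimple, $J$ is principal, say $J=\langle M\rangle$, and then $\dim W(M)=rank(M)=rank(J)=k$, so $W(M)$ is a $k$-dimensional subspace with $\varphi(W(M))=\langle M\rangle=J$. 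This exhausts all four things that need checking.

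I do not expect a genuine obstacle here: all the substantive content has already been carried out in Lemma~\ref{combLinear} and Corollary~\ref{Igualsubs}, and the theorem is essentially a repackaging of those results as a bijection. The one point deserving care — and the only place where an inattentive argument could go wrong — is the well-definedness of $\varphi$, since the same subspace is the row space of many different matrices; this is precisely where the equality (rather than the strict-inclusion) half of Corollary~\ref{Igualsubs} is used. If one preferred, one could avoid the well-definedness discussion altogether by instead defining the inverse map $\langle M\rangle\mapsto W(M)$ and invoking Corollary~\ref{Igualsubs} symmetrically, but the direction stated in the theorem is the more natural one to write out.
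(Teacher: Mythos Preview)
Your proof is correct and follows essentially the same route as the paper's: well-definedness and injectivity via the equality clause of Corollary~\ref{Igualsubs}, and surjectivity via principality of left ideals. You add the explicit check that $\varphi$ lands in ideals of rank $k$, which the paper leaves implicit, but otherwise the arguments coincide.
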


\begin{proof}
     Given a k-dimensional subspace $W$ of $\F_q^n$, with   basis \linebreak 
$B = \{ v_1, v_2, \cdots, v_k\}$, let us consider the matrix $M$ whose first k rows are the vectors $v_1, v_2, \cdots, v_k$ and the others are linear combinations of these ones. Clearly $W$ is equal to $W(M)$. Also if $W(M_1)= W(M_2)$, then $\langle M_1 \rangle = \langle M_2 \rangle$, by Corollary \ref{Igualsubs}. Therefore we conclude that $\varphi$ is well defined. Since, $\langle M_1 \rangle = \langle M_2 \rangle$ implies $W(M_1)= W(M_2)$, it follows that $\varphi$ is injective.   As all the ideals of $M_n(\F_q)$ are principal, $\varphi$ is also surjective.
\end{proof}

 Since   \cite[Chapter 24]{JR} gives the number of k-subespace of $\F_q^n$, with $0 < k \leq n$   we get the following. 

\begin{thm} \label{numberideals}The number of left ideals of rank k in $M_n(\F_q)$ is $$\frac{(q^n-1)(q^{n-1} -1)(q^{n-2}-1)\cdots (q^{n-k+1}-1)}{(q^k-1)(q^{k-1}-1)(q^{k-2}-1) \cdots (q-1)}.$$
\end{thm}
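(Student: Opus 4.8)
The plan is to combine Theorem~\ref{SubspaceIdeal} with a direct count of the $k$-dimensional subspaces of $\F_q^n$, the so-called Gaussian binomial coefficient. By Theorem~\ref{SubspaceIdeal}, the map $\varphi$ is a bijection between $k$-dimensional subspaces of $\F_q^n$ and left ideals of rank $k$ in $M_n(\F_q)$, so it suffices to show that the number of $k$-dimensional subspaces equals the stated ratio. I would prove this by the standard double-counting argument rather than merely citing \cite[Chapter 24]{JR}, so that the formula is self-contained.

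First I would count the number of \emph{ordered linearly independent $k$-tuples} $(v_1,\dots,v_k)$ of vectors in $\F_q^n$. The vector $v_1$ can be any nonzero vector, giving $q^n-1$ choices; having chosen $v_1,\dots,v_{i-1}$ spanning an $(i-1)$-dimensional subspace (which has $q^{i-1}$ elements), the vector $v_i$ must avoid that subspace, giving $q^n-q^{i-1}$ choices. Hence the number of such ordered tuples is $\prod_{i=0}^{k-1}(q^n-q^i)$. Next I would count, for a fixed $k$-dimensional subspace $W$, how many ordered bases it has: by the same reasoning applied inside $W\cong\F_q^k$, this number is $\prod_{i=0}^{k-1}(q^k-q^i)$, independent of $W$. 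Since every linearly independent $k$-tuple is an ordered basis of exactly one $k$-dimensional subspace, dividing gives that the number of $k$-dimensional subspaces is
$$\frac{\prod_{i=0}^{k-1}(q^n-q^i)}{\prod_{i=0}^{k-1}(q^k-q^i)}.$$

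Finally I would simplify this expression to match the statement: in the numerator factor $q^n-q^i = q^i(q^{n-i}-1)$, and in the denominator factor $q^k-q^i = q^i(q^{k-i}-1)$, so the powers of $q$ cancel between numerator and denominator, leaving
$$\frac{(q^n-1)(q^{n-1}-1)\cdots(q^{n-k+1}-1)}{(q^k-1)(q^{k-1}-1)\cdots(q-1)},$$
which is exactly the claimed formula. Combining this with the bijection $\varphi$ from Theorem~\ref{SubspaceIdeal} completes the proof. There is no real obstacle here; the only point requiring a little care is checking that the number of ordered bases of a $k$-dimensional subspace genuinely does not depend on which subspace is chosen (it does not, since any two $k$-dimensional subspaces are linearly isomorphic), and that the index ranges in the telescoped products line up correctly with the $n-k+1$ factors on top and the $k$ factors on the bottom.
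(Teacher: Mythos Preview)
Your proof is correct and follows the same approach as the paper: use the bijection of Theorem~\ref{SubspaceIdeal} to reduce to counting $k$-dimensional subspaces of $\F_q^n$, then invoke the Gaussian binomial coefficient. The only difference is that the paper simply cites \cite[Chapter~24]{JR} for the subspace count, whereas you supply the standard double-counting argument explicitly; your version is thus more self-contained but not mathematically distinct.
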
 

We now determine conditions on two idempotent matrices to generate the same left ideal.

\begin{thm}  \label{GeradorId} Given $R, S \in M_n(\F_q)$, the following assertions are equivalent:
 
\noindent$(i)$\;\;$R$ and $S$ are idempotents and $\langle S \rangle = \langle R \rangle$; 

\noindent$(ii)$\;$RS=R$ and $SR=S$;

\noindent$(iii)R=S+(I-S)RS$ and $S$ is an idempotent, where $I$ denotes the identity matrix of $M_n(\F_q)$.
\end{thm}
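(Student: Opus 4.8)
The plan is to prove the chain of implications $(i) \Rightarrow (ii) \Rightarrow (iii) \Rightarrow (i)$, which gives the full equivalence. Throughout, the key tool is Lemma~\ref{combLinear}: $\langle A \rangle \subseteq \langle B \rangle$ is equivalent to $A = CB$ for some $C \in M_n(\F_q)$, i.e. to $A \in \langle B \rangle$ in the sense of left-ideal membership. I would also use the elementary fact that if $E$ is idempotent then $EX = X$ for every $X \in \langle E \rangle$ (since $X = CE$ gives $EX = E C E$... more carefully: $X \in \langle E \rangle$ means $X = CE$, so $XE = CE^2 = CE = X$; thus $E$ acts as a right identity on its own left ideal, and symmetrically one gets the relations one wants).

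For $(i) \Rightarrow (ii)$: assume $R, S$ are idempotents with $\langle R \rangle = \langle S \rangle$. Since $R \in \langle S \rangle$, write $R = CS$; then $RS = CS^2 = CS = R$. Symmetrically, since $S \in \langle R \rangle$, we get $SR = S$. For $(ii) \Rightarrow (iii)$: assume $RS = R$ and $SR = S$. First note $S$ is idempotent: $S^2 = (SR)S = S(RS) = SR = S$, using $SR = S$ then $RS = R$ then $SR = S$. Now compute $S + (I-S)RS = S + RS - SRS = S + R - SR = S + R - S = R$, using $RS = R$ and $SRS = (SR)S = SS = S$ (or directly $SR = S$). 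So $R = S + (I-S)RS$. For $(iii) \Rightarrow (i)$: assume $S$ is idempotent and $R = S + (I-S)RS$. We must show $R$ is idempotent and $\langle R \rangle = \langle S \rangle$. Expanding, $R = S + RS - SRS$. Multiply on the right by $S$: $RS = S + RSS - SRSS = S + RS - SRS = S + (RS - SRS)$, and since $R - S = RS - SRS$ this gives $RS = S + (R - S) = R$; wait, that only holds after checking — let me instead just verify $RS = R$ directly from the defining equation by right-multiplying by $S$ and using $S^2 = S$, which yields $RS = S + (I-S)RS\cdot S = S + (I-S)RS = R$. So $RS = R$. Then $R^2 = R(RS) = (R R) S$... better: $R^2 = R \cdot R$; substitute one copy: $R^2 = (S + (I-S)RS) R$. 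This needs $SR$. Alternatively left-multiply the defining equation by $S$: $SR = S S + S(I-S)RS = S + 0 = S$, since $S(I-S) = S - S^2 = 0$. So $SR = S$ as well, and now we are back in situation $(ii)$, from which $R^2 = (RS)(RS)$... concretely $R^2 = R(RS)$? No: use $R = RS$ so $R^2 = (RS)R = R(SR) = RS = R$, hence $R$ is idempotent. Finally $RS = R$ gives $R \in \langle S \rangle$ so $\langle R \rangle \subseteq \langle S \rangle$, and $SR = S$ gives the reverse inclusion, so $\langle R \rangle = \langle S \rangle$, establishing $(i)$.

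The only mild subtlety — the part I would be most careful about — is bookkeeping the order of the algebraic manipulations in $(iii) \Rightarrow (i)$: one must extract $RS = R$ and $SR = S$ from the single equation $R = S + (I-S)RS$ before concluding anything, and the clean way to do that is to right-multiply by $S$ (using $S^2 = S$ to collapse $(I-S)RS \cdot S = (I-S)RS$) and to left-multiply by $S$ (using $S(I-S) = 0$). Once those two relations are in hand, everything else is a two-line computation and the argument closes back onto $(i)$ via Lemma~\ref{combLinear}. No genuine obstacle is expected; the result is essentially a formal consequence of idempotents acting as one-sided identities on their own left ideals.
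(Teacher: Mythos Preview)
Your argument is correct and essentially matches the paper's: the same computations appear, just organized as the cycle $(i)\Rightarrow(ii)\Rightarrow(iii)\Rightarrow(i)$ rather than the paper's pair of bi-implications $(i)\Leftrightarrow(ii)$ and $(ii)\Leftrightarrow(iii)$. In particular your step $(iii)\Rightarrow(i)$ is exactly the paper's $(iii)\Rightarrow(ii)$ (left- and right-multiply by $S$ to extract $SR=S$ and $RS=R$) followed by its $(ii)\Rightarrow(i)$, so nothing new is happening---though you should tidy the exploratory asides (``wait'', ``better:'') into a clean linear write-up.
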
 

\begin{proof}

(i) $\Rightarrow$ (ii)\\ If $R \in \langle S \rangle$ then, there exists $ A \in M_n(\F_q)$, such that, $R=AS.$
 
Hence, $RS= AS. S= AS = R$, as $S$ is idempotent. Similarly, we conclude $SR=S$.
 
(ii) $\Rightarrow$ (i)\\ Notice that (ii) implies that $R \in \langle S \rangle$ and \\$ S \in \langle R \rangle$, thus  $\langle S \rangle = \langle R \rangle$. 

Also, $S^2= (SR)S= S(RS) = SR =S $ 
 and  $R^2=(RS)R= R(SR) = RS = R.$

(ii) $\Rightarrow$ (iii)\\ We compute, $S+(I-S)RS  =  S+(I-S)R = S + R -SR  = R$.

Also $S^2 =(SR)S = S(RS) = SR =S$.

 (iii) $\Rightarrow$ (ii) \\ 
$SR= S\left(S+(I-S)RS\right)=S $, and \\  $R=S+(I-S)RS=S + RS -  SS = RS $. 

\end{proof}
 
Given an idempotent, we now determine all others idempotents that generate the same ideal.

\begin{thm} \label{Geradorg} Given an idempotent $ S \in M_n(\F_q)$   then, for all $M \in M_n(\F_q)$, the matrix $M_0 = S+(I-S)MS$ is  also an idempotent generator of $\langle S \rangle$. 

Conversely, every idempotent generator of $\langle S \rangle$ is of this form.
\end{thm}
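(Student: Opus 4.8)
The plan is to split the statement into its two halves and handle each using Theorem~\ref{GeradorId}. For the first half, I would take an arbitrary $M \in M_n(\F_q)$ and set $M_0 = S + (I-S)MS$. To show $M_0$ generates $\langle S \rangle$ and is idempotent, by the equivalence (i)$\Leftrightarrow$(iii) of Theorem~\ref{GeradorId} (applied with $R = M_0$) it suffices to check that $S$ is idempotent — which is given — and that $M_0 = S + (I-S)M_0 S$. So the first key step is the identity $(I-S)M_0S = (I-S)MS$; expanding $M_0S = SS + (I-S)MSS = S + (I-S)MS = M_0$ using $S^2 = S$, and then $(I-S)M_0 = (I-S)S + (I-S)(I-S)MS = 0 + (I-S)MS$ since $(I-S)S = 0$ and $(I-S)^2 = I-S$. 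Combining these, $(I-S)M_0S = (I-S)MS$, hence $S + (I-S)M_0S = S + (I-S)MS = M_0$, which is exactly condition (iii) with the pair $(M_0, S)$. Therefore $M_0$ is an idempotent and $\langle M_0 \rangle = \langle S \rangle$.

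For the converse, let $R$ be any idempotent generator of $\langle S \rangle$. Then by (i)$\Rightarrow$(iii) of Theorem~\ref{GeradorId}, $R = S + (I-S)RS$. This is already of the required form: take $M = R$. So the converse is essentially immediate from the earlier theorem, and I would simply remark that $R = S + (I-S)RS = S + (I-S)MS$ with $M = R$.

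The only mildly delicate point is bookkeeping: Theorem~\ref{GeradorId} is stated for a pair $(R,S)$ and I am instantiating its "$R$" as the candidate $M_0$ while keeping its "$S$" as the given $S$; one must verify the hypotheses of clause (iii) are met for that instantiation, namely that $S$ is idempotent (given) and that the displayed identity $M_0 = S + (I-S)M_0S$ holds (the computation above). There is no real obstacle — everything reduces to repeated use of $S^2 = S$ and $(I-S)S = S(I-S) = 0$ — so I expect the main "obstacle" to be merely presenting the short chain of matrix identities cleanly rather than any genuine difficulty. One could alternatively prove the first half directly by verifying $M_0^2 = M_0$, $M_0 S = M_0$, and $S M_0 = S$ and invoking (ii)$\Leftrightarrow$(i), but routing through clause (iii) is shorter since the defining formula for $M_0$ already has the shape appearing in (iii).
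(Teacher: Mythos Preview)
Your proof is correct and follows essentially the same approach as the paper: both halves rest on Theorem~\ref{GeradorId}, and the converse is handled identically. The only cosmetic difference is that for the forward direction the paper asserts $M_0^2 = M_0$ directly and then verifies $SM_0 = S$ and $M_0S = M_0$ (i.e.\ clause~(ii)), whereas you route through clause~(iii) by checking $M_0 = S + (I-S)M_0S$; the underlying identities are the same.
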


\begin{proof}

Clearly, $M_0$ is an idempotent and, using Theorem \ref{GeradorId}, we have:
$$SM_0= S\left(S+(I-S)MS\right)=S \Rightarrow S \in \langle M_0 \rangle \; \mbox{and }$$
$$M_0S=\left(S+(I-S)MS\right)S=S+(I-S)MS=M_0  \Rightarrow M_0 \in \langle S \rangle.$$  

Therefore, $\langle S \rangle = \langle M_0 \rangle$.

The converse follows immediately from part (iii) of the previous theorem.

\end{proof}

\begin{rmk}\label{remark} {\em Note that, if we denote by $[M]$ the right ideal generated by a matrix $M\in M_n(\F_q)$ and by $V(M)$ the subspace of $\F_q^n$ generated by the columns of $M$, then the following results, duals to those in this section, hold:

\noindent (a) $[M_1]=[M_2]$ if and only of $V(M_1)=V(M_2)$.

\noindent (b) If $R,S\in M_n(\F_q)$ are idempotents,  then $[R] = [S]$ if and only if $RS=S$ and $SR=R$.}
\end{rmk}

\section{ Idempotent generators of   left ideals.} 

The purpose of this section is to exhibit  idempotent generators of   left ideals of a given rank $k$ in $M_n(\F_q)$ and to determine, for each ideal $I$, the set of all idempotent generators of $I$. We   first study the idempotents of rank 1:

\begin{thm} \label{Idem} The following matrices are idempotent generators of the different minimal left ideals of $M_n(\F _q)$ 

 $$\tiny{\left[ \begin{array}{cccccc} 1 & a_1 & a_2 & ... & a_{n-1} \\ 0 & 0 & 0 & \cdots  & 0  \\ 0 & 0 & 0 & \cdots & 0  \\ \vdots & \vdots & \vdots & \vdots &\vdots\\ 0 & 0 & 0 &\cdots & 0 \end{array} \right],  \left[ \begin{array}{cccccc} 0 & 0 & 0 & \cdots & 0 \\0 & 1 & a_2 & \cdots & a_{n-1} \\ 0 & 0 & 0 &\cdots & 0  \\  \vdots & \vdots & \vdots & \vdots  & \vdots \\ 0 & 0 & 0 & \cdots & 0  \end{array} \right], \cdots,
\left[ \begin{array}{cccccc} 0 & 0 & \cdots  & 0 & 0\\ 0 & 0 & \cdots  & 0& 0\\  \vdots & \vdots & \vdots & \vdots  \\ 0 &0 & \cdots  & 0 & 0  \\ 0 & 0 & \cdots & 0& 1  \end{array} \right]}.$$ 

 Moreover,  each minimal left ideal has $q^{n-1}$ different  idempotent generators.
\end{thm}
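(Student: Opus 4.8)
The plan is to verify each of the claimed matrices is idempotent, check that they generate distinct minimal left ideals, and finally count the idempotent generators of each such ideal. For the first part, write $E_i(a_1,\dots,a_{n-1})$ for the $i$-th matrix in the list (row $i$ has a $1$ in position $i$ and entries $a_j$ in the trailing positions, all other rows zero). A direct computation shows $E_i^2 = E_i$: the only nonzero row of $E_i$ is row $i$, whose leading entry in column $i$ is $1$, so multiplying on the left by $E_i$ reproduces row $i$ and kills everything else. Each $E_i$ has rank $1$, so by Theorem \ref{numberideals} (with $k=1$) its left ideal $\langle E_i\rangle$ is minimal. By Corollary \ref{Igualsubs}, $\langle E_i\rangle = W(E_i)$-determined, and $W(E_i) = \F_q\cdot(0,\dots,0,1,a_1^{(i)},\dots)$ is the line spanned by that single nonzero row; choosing different leading coordinate $i$ manifestly gives different lines, so the listed matrices (as $i$ ranges over $1,\dots,n$) sit in distinct ideals. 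One should also note that the number of matrices of the $i$-th type is $q^{n-i}$, and summing $\sum_{i=1}^n q^{n-i} = (q^n-1)/(q-1)$ recovers exactly the count in Theorem \ref{numberideals} for $k=1$, confirming that \emph{every} minimal left ideal is represented.

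The counting statement — each minimal left ideal has exactly $q^{n-1}$ idempotent generators — is where I would invoke Theorem \ref{Geradorg}: every idempotent generator of $\langle S\rangle$ has the form $M_0 = S + (I-S)MS$ for some $M \in M_n(\F_q)$, and conversely. So the task reduces to counting how many distinct matrices arise as $S + (I-S)MS$ as $M$ ranges over all of $M_n(\F_q)$, for a fixed rank-$1$ idempotent $S$. The map $M \mapsto (I-S)MS$ is $\F_q$-linear, so its image is a subspace of $M_n(\F_q)$ and the number of distinct values of $M_0$ equals the cardinality of that image, namely $q^d$ where $d = \dim_{\F_q}\{(I-S)MS : M \in M_n(\F_q)\}$. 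I would compute $d$ by a rank argument: the operator $M \mapsto (I-S)MS$ on $M_n(\F_q)$ is (left multiplication by $I-S$) composed with (right multiplication by $S$); since $S$ has rank $1$ and $I-S$ has rank $n-1$ (as $S$ is idempotent of rank $1$, $I-S$ is idempotent of rank $n-1$), the image has dimension $\operatorname{rank}(I-S)\cdot\operatorname{rank}(S) = (n-1)\cdot 1 = n-1$. Hence the number of idempotent generators is $q^{n-1}$, as claimed.

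Alternatively, and perhaps more transparently for the reader, one can count directly using the concrete description. By Theorem \ref{GeradorId}(iii), the idempotent generators $R$ of $\langle S\rangle$ are exactly the idempotents with $W(R) = W(S)$, i.e. the rank-$1$ idempotents whose row space is the fixed line $L = W(S)$. Such an $R$ has all rows in $L$, so $R$ is determined by a column vector $\mathbf{c} = (c_1,\dots,c_n)^t$ and a fixed spanning row vector $\mathbf{v}$ of $L$ via $R = \mathbf{c}\,\mathbf{v}$; the idempotency condition $R^2 = R$ becomes $(\mathbf{v}\mathbf{c})\,\mathbf{c}\mathbf{v} = \mathbf{c}\mathbf{v}$, i.e. $\mathbf{v}\mathbf{c} = 1$ (the scalar $\mathbf{v}\mathbf{c}$ must be $1$ for a nonzero such product). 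The number of $\mathbf{c} \in \F_q^n$ with $\mathbf{v}\mathbf{c} = 1$, for fixed nonzero $\mathbf{v}$, is $q^{n-1}$ — it is a coset of the hyperplane $\mathbf{v}^\perp$. This again gives $q^{n-1}$ generators.

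The main obstacle is making the dimension computation in the second approach fully rigorous: one must be careful that the composition of left-multiplication-by-$(I-S)$ and right-multiplication-by-$S$ genuinely has image of dimension $\operatorname{rank}(I-S)\cdot\operatorname{rank}(S)$ rather than something smaller, which is true here because both factors act on independent sides but deserves a one-line justification (e.g. by conjugating $S$ to the standard idempotent $E_{11}$, for which $(I-S)MS$ visibly ranges over matrices supported on the last $n-1$ rows of the first column, an $(n-1)$-dimensional space). The third approach sidesteps this entirely and I would likely present it as the cleaner route, reserving the linear-algebra image computation as a remark.
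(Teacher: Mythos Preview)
Your argument is correct. For the first half you reproduce the paper's reasoning: match each matrix to the line spanned by its nonzero row via Corollary~\ref{Igualsubs}, and count $\sum_{i=1}^n q^{n-i}=(q^n-1)/(q-1)$ to hit the total from Theorem~\ref{numberideals}. (You only argue distinctness for different leading positions $i$; for fixed $i$ two normalized vectors $(0,\dots,0,1,a_{i+1},\dots,a_n)$ span the same line only if equal, so the count is indeed justified.)

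For the number of idempotent generators the paper takes a more concrete route than either of yours: it writes $\langle E_{11}\rangle$ explicitly as the set of matrices supported on the first column, notes that such a matrix is idempotent iff its $(1,1)$ entry equals $1$ (giving $q^{n-1}$ idempotents), and then transports this count to an arbitrary minimal ideal by conjugation. Your Approach~A (parametrize via Theorem~\ref{Geradorg} and compute $\dim\{(I-S)MS\}=\operatorname{rank}(I-S)\cdot\operatorname{rank}(S)=n-1$) and Approach~B (write $R=\mathbf{c}\mathbf{v}$ and impose $\mathbf{v}\mathbf{c}=1$) both handle all minimal ideals uniformly without a separate conjugation step, and Approach~A generalizes verbatim to rank $k$, yielding $q^{k(n-k)}$, which is exactly the paper's later Theorem~3.3. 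The paper's argument is more hands-on and avoids the two-sided-multiplication rank lemma; amusingly, your one-line conjugation remark justifying that lemma is essentially the paper's entire second-part argument, compressed.
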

\begin{proof}

We observe that, according to Theorem \ref{SubspaceIdeal}, each minimal left ideal $ I = \langle M\rangle \subset M_n(\F _q)$ is related with a 1-dimensional subspace $W = W(M) \subset \F_q^n$. Let $v=(a_1, a_2, \cdots, a_n)$ be a generator of $W$. Then,  the rows of any matrix in the ideal $I$ are scalar multiples of $v$. 

Let $i$ be the smallest index such that   $a_i\neq 0$, $1 \leq i \leq n$ and set $u=(1/a_i)v$. 
Then,   $u=(0, \cdots, 0, 1, b_{i+1}, \cdots, b_n)$, is also a generator of $W$. Thus,  we can assume that $I$ is generated by  a matrix whose   $i$-th row is the vector $u$ and the others are zero. Notice that the total number of matrices of this form is $S_{n_i} =\sum_{i=1}^n q^{n-i} =  \frac{q^n-1}{q-1}$. \\

To prove the second part of the statement, let $E_{11}$ denote the matrix whose entry in position $(1,1)$ is equal to 1 and all other entries are equal to 0. Then,
$$\tiny{\langle E_{11}\rangle = \left\{ \left[\begin{array}{cccc} a_1 & 0 & \cdots & 0 \\
                                                   a_2 & 0 & \cdots & 0 \\
                                                       &   & \cdots &  \\
																									 a_n & 0 & \ldots & 0 \end{array}\right] \; | \; a_i\in \F_q, 1\leq i\leq n \right\}.}$$
																									
It is easy to see that a non-zero element in $\langle E_{11}\rangle$ is an idempotent if and only if $a_1=1$, so the number of idempotents in $\langle E_{11}\rangle$ is $q^{n-1}$ and each of them is a generator of this ideal.																						
Let $J= \langle M\rangle$ be a minimal left ideal of $M_n(\F_q)$. As $rank(M)=1$, there exist an invertible matrix $U\in M_n(\F_q)$ such that $U^{-1}MU=E_{11}$. Conjugation by $U$ induces   an automorphism $\psi_U :M_n(\F_q)\rightarrow M_n(\F_q)$ such that $\psi_U(J) = \langle E_{11}\rangle$. Consequently, $J$ also has $q^{n-1}$ generators which are idempotents.
\end{proof}

We now extend this result to left ideals of a fixed rank $k$, $1\leq k \leq n$. \\

Let $E(n,k)$ denote the set of all matrices $A=(a_{ij})$ such that there exist $k$ rows, at positions denoted $i_1, i_2, \ldots ,i_k$ verifying:

(i) Every row of $A$, except these, is a row of zeros.

(ii) $a_{i_ji_j}= 1$ and $a_{i_j,h}=0$ if $h<i_j$, $1\leq j\leq k$.

(iii) $a_{i_j,h}=0$ for $h= i_s, \; j+1\leq s  \leq k$.

\noindent The set of numbers  $i_1, i_2, \ldots ,i_k$ will be called the {\em pivotal positions} of $A$.\\

For example, $E(4,3)$ is the set of all matrices of the form:
$$\tiny{\left[\begin{array}{cccc} 1 & 0 & 0 & a_{14} \\ & 1 & 0 & a_{24} \\ & & 1 & a_{34} \\ & & & 0 \end{array}\right], \  \left[\begin{array}{cccc} 1 & 0 & a_{13} & 0 \\ & 1 & a_{23} & 0 \\ & & 0 & 0 \\ & & & 1 \end{array}\right], \ 
\left[\begin{array}{cccc} 1 & a_{12} & 0 & 0 \\ & 0 & 0 & 0 \\ & & 1 & 0 \\ & & & 1 \end{array}\right], \ 
\left[\begin{array}{cccc} 0 & 0 & 0 & 0 \\ & 1 & 0 & 0 \\ & & 1 & 0 \\ & & & 1 \end{array}\right]}$$
with each $a_{ij}\in \F_q$.\\

Clearly, every matrix in $E(n,k)$ is an idempotent, of rank $k$. We first claim that each matrix in $E(n,k)$ generates a different left ideal of $M_n(\F_q)$. 

In fact, notice that, given $R,S\in E(n,k)$, if they differ in some pivotal position, then $W(R)\neq W(S)$ so $\langle R\rangle \neq \langle S\rangle$. 

On the other hand, if $R$ and $S$ have the same pivotal positions, then it is easy to see that $[R] =[S]$ and part (b) of Remark~\ref{remark} shows that $RS=S$ and $SR=R$. If    $\langle R\rangle = \langle S\rangle$, it follows from part $(ii)$ of Theorem \ref{GeradorId} that also $RS=R$ and $SR=S$, showing that $R=S$.\\
 
We shall  show that $S(n,k)$, the number of elements in $E(n,k)$ is equal to the number of left ideals of rank $k$ in $M_n(\F_q)$.





\begin{lema}\label{outra}
With the notations above, we have that
$$S(n,k) = \frac{(q^n-1)(q^{n-1} -1)(q^{n-2}-1)\cdots (q^{n-k+1}-1)}{(q^k-1)(q^{k-1}-1)(q^{k-2}-1) \cdots (q-1)}.$$
\end{lema}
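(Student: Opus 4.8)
The plan is to count $E(n,k)$ by grouping its matrices according to their pivotal positions. First I would fix a set of pivotal positions $1\le i_1<i_2<\cdots<i_k\le n$ and count the $A\in E(n,k)$ having exactly these. By conditions (i)--(iii), every entry of $A$ is determined except the entries $a_{i_j,h}$ with $h>i_j$ and $h\notin\{i_{j+1},\dots,i_k\}$, which may be chosen arbitrarily in $\F_q$. Since $i_1<\cdots<i_j<h$, the forbidden columns $\{i_{j+1},\dots,i_k\}$ are precisely the pivotal positions exceeding $i_j$, so row $i_j$ carries $(n-i_j)-(k-j)$ free entries; hence there are exactly $q^{\,e(i_1,\dots,i_k)}$ such matrices, where $e(i_1,\dots,i_k)=\sum_{j=1}^k (n-i_j-k+j)$. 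Summing over all choices of pivotal positions,
$$S(n,k)=\sum_{1\le i_1<\cdots<i_k\le n} q^{\,\sum_{j=1}^k (n-i_j-k+j)}.$$

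To evaluate this sum I would split it according to whether $1$ is a pivotal position. If $1\notin\{i_1,\dots,i_k\}$ then, by (i) and (ii), the first row and the first column of $A$ vanish, and deleting them sets up a bijection with $E(n-1,k)$ that preserves the number of free entries; if $1\in\{i_1,\dots,i_k\}$, the first row contributes $n-k$ free entries while the lower-right $(n-1)\times(n-1)$ block ranges over $E(n-1,k-1)$. This gives the recursion $S(n,k)=S(n-1,k)+q^{\,n-k}\,S(n-1,k-1)$, with $S(n,0)=S(k,k)=1$. The claimed value, the Gaussian binomial coefficient $\binom{n}{k}_q$, obeys the identical recursion $\binom{n}{k}_q=\binom{n-1}{k}_q+q^{\,n-k}\binom{n-1}{k-1}_q$ and the same initial values, so an induction on $n$ finishes the proof. (Alternatively, writing $m_j=i_j-j$ turns the exponent into $k(n-k)-\sum_j m_j$ and, after replacing each $m_j$ by $(n-k)-m_{k+1-j}$, turns $S(n,k)$ into $\sum_{0\le m_1\le\cdots\le m_k\le n-k} q^{\,m_1+\cdots+m_k}$, the generating function of partitions contained in a $k\times(n-k)$ box, which is classically equal to $\binom{n}{k}_q$.)

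The only delicate point is the first step --- correctly identifying, once the pivotal positions are fixed, exactly which entries of a matrix in $E(n,k)$ are free and checking that their number is $\sum_j(n-i_j-k+j)$. Everything afterward is routine bookkeeping with index shifts, and I anticipate no genuine obstacle.
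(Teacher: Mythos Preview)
Your argument is correct. Both you and the paper prove the lemma by establishing a Pascal-type recursion for $S(n,k)$ and matching it against the known recursion for the Gaussian binomial, but you split on the opposite end: the paper conditions on whether $n$ is a pivotal position (deleting the last row and column) and obtains
\[
S(n,k)=S(n-1,k-1)+q^{k}\,S(n-1,k),
\]
whereas you condition on whether $1$ is a pivotal position (deleting the first row and column) and obtain the companion identity
\[
S(n,k)=S(n-1,k)+q^{\,n-k}\,S(n-1,k-1).
\]
These are the two standard $q$-Pascal rules, so the inductions are equally short. Your write-up adds two things the paper does not: an explicit closed form $q^{\sum_j(n-i_j-k+j)}$ for the number of matrices with a prescribed set of pivotal positions, and the alternative evaluation via the substitution $m_j=i_j-j$ identifying $S(n,k)$ with the generating function for partitions in a $k\times(n-k)$ box. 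Either of these could replace the recursion entirely, which is a mild gain in transparency; the paper's version, on the other hand, is slightly more self-contained since it verifies the needed $q$-Pascal identity by direct algebra rather than citing it.
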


\begin{proof} We use induction on $n$, If $n=1$ the result is trivially true, so assume that the result holds for $n-1$ and $k\leq n-1$. 

We compute separately the number of elements in $E(n,k)$ that have the pivotal position $i_k=n$ and those which do not. Clearly, there are $S(n-1,k-1)$ idempotents that have an entry equal to 1 in position $n,n$. Since each matrix having an entry equal to 0 in position $n,n$ has $k$ entries from $\F_q$ in the last column, there exist $S(n-1,k)q^k$ such matrices, so we have:
$$S(n,k) = S(n-1,k-1) + S(n-1,k)q^{k}.$$
Using the induction hypothesis we get:

\begin{eqnarray*}
S(n,k) & = & 
\frac{(q^{n-1}-1)\cdots (q^{n-k+1}-1)}{(q^k-1)\cdots (q-1)} +\\
& & +  \frac{(q^{n-1}-1)\cdots   (q^{n-k}-1)}{(q^{k}-1)\cdots  (q-1)}\cdot q^{k} = \\
& = & \frac{(q^{n-1}-1)  \cdots (q^{n-k+1}-1)}{(q^{k-1}-1)   \cdots (q-1)} \left[\frac{ q^{n-k}-1}{q^k-1} +  q^{n-k} \right]\\
& = &  \frac{(q^{n-1}-1)  \cdots (q^{n-k+1}-1)}{(q^{k-1}-1)   \cdots (q-1)} \left[1+ \frac{(q^{n-k}-1)q^k}{q^k-1}\right]\\
& = & \frac{(q^{n-1}-1)  \cdots (q^{n-k+1}-1)}{(q^{k-1}-1)   \cdots (q-1)} . \frac{(q^n-1)}{(q^k-1)}.
\end{eqnarray*}
\end{proof}

Notice that the number of elements in $E(n,k)$ found above is equal to the number of left ideals of rank $k$ in $M_n(\F_q)$, as seen in Theorem~\ref{numberideals}.   So, we have shown the first part of the following.

\begin{thm} The elements of the set $S(n,k)$ are idempotent generators of the different left ideals of rank $k$ of $M_n(\F_q)$. Moreover, each left ideal of rank $k$ has $q^{(n-k)k}$ different idempotent generators.
\end{thm}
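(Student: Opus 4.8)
The plan is to address the two assertions separately. For the first assertion, I already know from Lemma~\ref{outra} that $S(n,k)$ equals the number of left ideals of rank $k$, and from the discussion preceding Lemma~\ref{outra} that distinct elements of $E(n,k)$ generate distinct left ideals (the argument split into the case of differing pivotal positions, handled via $W(R)\neq W(S)$ and Corollary~\ref{Igualsubs}, and the case of equal pivotal positions, handled via Remark~\ref{remark}(b) together with Theorem~\ref{GeradorId}(ii)). Since each element of $E(n,k)$ is visibly an idempotent of rank $k$, the map sending a matrix in $E(n,k)$ to the ideal it generates is an injection from a set of size $S(n,k)$ into the set of left ideals of rank $k$, which also has size $S(n,k)$ by Theorem~\ref{numberideals}; hence it is a bijection. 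This gives the first part.

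For the second assertion, I would start from Theorem~\ref{Geradorg}: fixing an idempotent generator $S$ of a left ideal $I$ of rank $k$, every idempotent generator of $I$ is of the form $M_0 = S + (I-S)MS$ for some $M \in M_n(\F_q)$. So the number of idempotent generators of $I$ equals the cardinality of the set $\{\, S + (I-S)MS \;:\; M \in M_n(\F_q)\,\}$, and the task is to count this set. The key observation is that $S + (I-S)MS = S + (I-S)M'S$ if and only if $(I-S)(M-M')S = 0$, i.e. the map $M \mapsto (I-S)MS$ has image whose size I must compute; equivalently, I must find the cardinality of the image of the linear map $M \mapsto (I-S)MS$ on $M_n(\F_q)$.

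To compute that image size it is cleanest to conjugate $S$ into a convenient normal form. Since $S$ is an idempotent of rank $k$, there is an invertible $U$ with $U^{-1}SU = E$, where $E$ is the diagonal idempotent with $1$'s in the first $k$ diagonal positions and $0$'s elsewhere; conjugation by $U$ is a ring automorphism, so it suffices to count $\{\, E + (I-E)ME \;:\; M \in M_n(\F_q)\,\}$. Writing matrices in block form relative to the decomposition $\F_q^n = \F_q^k \oplus \F_q^{n-k}$, one has $I - E = \begin{pmatrix} 0 & 0 \\ 0 & I_{n-k}\end{pmatrix}$ and $E = \begin{pmatrix} I_k & 0 \\ 0 & 0\end{pmatrix}$, so $(I-E)ME$ picks out precisely the lower-left $(n-k)\times k$ block of $M$ and places it in the lower-left position, with everything else zero. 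As $M$ ranges over $M_n(\F_q)$ this block ranges freely over all of $M_{(n-k)\times k}(\F_q)$, a set of size $q^{(n-k)k}$, and distinct blocks give distinct matrices $E + (I-E)ME$. Hence $I$ has exactly $q^{(n-k)k}$ idempotent generators, independent of $I$.

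The only mild obstacle is the bookkeeping in the conjugation step: one must check that passing to the normal form $E$ genuinely preserves the count, which is immediate because $\psi_U(M_0) = \psi_U(S) + (I - \psi_U(S))\psi_U(M)\psi_U(S)$ and $\psi_U$ is bijective on $M_n(\F_q)$, so $\psi_U$ restricts to a bijection between the idempotent generators of $I = \langle S\rangle$ and those of $\langle E\rangle = \psi_U(I)$. Everything else is the routine block computation sketched above. (As a sanity check, when $k = 1$ this recovers the count $q^{n-1}$ of Theorem~\ref{Idem}, and when $k = n$ it gives $1$, as it must since the only rank-$n$ left ideal is $M_n(\F_q)$ itself, generated uniquely by $I$.)
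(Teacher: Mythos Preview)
Your proposal is correct and follows essentially the same route as the paper: the first assertion is handled by the injectivity-plus-counting argument already set up before Lemma~\ref{outra}, and the second by applying Theorem~\ref{Geradorg} to the diagonal rank-$k$ idempotent, reading off the $(n-k)\times k$ lower-left block, and transferring to an arbitrary ideal via conjugation. Your write-up is slightly more explicit about why conjugation preserves the count, but the underlying argument is the same.
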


\begin{proof} We are left to prove only the last statement. \vspace{.3cm}

Set $F_{kk} = \mbox{diag}(\underbrace{1,  \ldots ,1}_{(k \mbox{ {\tiny times }})}, 0, \ldots 0)$. Then, using Theorem 2.6, we have that any other idempotent generator of   $\langle F_{kk}\rangle$ is of the form $$M_0 = F_{kk}+(I-F_{kk})MF_{kk}\mbox{,\; with}\; M\in M_n(\F_q).$$
$\mbox{So,}\;\tiny{M_0 = \left[\begin{array}{cc} I & 0 \\ B & 0 \end{array}\right]} \mbox{ with } I\in M_{k,k}(\F_q), \; B\in M_{n-k,k}(\F_q).$

Hence, this ideal has $q^{(n-k)k}$ generators.

Arguments similar to those in the proof of Theorem 3.1 show that all left ideals of rank $k$ have the same number of idempotent generators.   
\end{proof}

Notice that a matrix in $\langle F_{kk}\rangle$ is of rank $k$ if and only if the first $k$ rows are linearly independent. The total  number of choices of $k$ elements of $\F_q^n$ which are linearly independent is
$$(q^n-1)(q^n-q)(q^n-q^2)\cdots (q^n-q^{k-1}).$$

All left ideals of rank $k$ contain the same number of matrices of that rank, since they are isomorphic to $\langle F_{kk}\rangle$. As every matrix of rank $k$ belongs to one  and only one such an ideal, using Theorem 3.3 above, we get the following.

\begin{thm} Let $k$ be an integer, $1 \leq k\leq n$. Then, the number of matrices of rank $k$ in $M_n(\F_q)$ is
$$\frac{((q^n-1)(q^{n-1} -1)(q^{n-2}-1)\cdots (q^{n-k+1}-1))^2}{(q^k-1)(q^{k-1}-1)(q^{k-2}-1) \cdots (q-1)}.$$

\end{thm}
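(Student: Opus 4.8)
The plan is to partition the matrices of rank $k$ in $M_n(\F_q)$ according to the (unique) left ideal of rank $k$ each one generates, and then to multiply the number of such ideals by the number of rank-$k$ matrices contained in a single one. For the uniqueness point: a matrix $M$ of rank $k$ lies in $\langle M\rangle$, which has rank $k$; and if $M$ also lies in a left ideal $J=\langle R\rangle$ with $rank(J)=k$, then $\langle M\rangle\subseteq\langle R\rangle$, and were this inclusion strict Corollary~\ref{Igualsubs} would give $W(M)\subsetneqq W(R)$, forcing $rank(M)<k$; hence $\langle M\rangle=J$. Consequently
\[
\#\{M\in M_n(\F_q):rank(M)=k\}=\sum_{J}\#\{M\in J:rank(M)=k\},
\]
the sum ranging over the left ideals $J$ of rank $k$.

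I would then show that every term in this sum equals one and the same number $m$, and evaluate it on the particular ideal $\langle F_{kk}\rangle$ with $F_{kk}=\mathrm{diag}(1,\dots,1,0,\dots,0)$ ($k$ ones). If $J=\langle R\rangle$ is a left ideal of rank $k$ with $R$ idempotent, then $R$ and $F_{kk}$ are idempotent matrices of the same rank, hence conjugate: $U^{-1}RU=F_{kk}$ for a suitable invertible $U$. The inner automorphism $\psi_U\colon X\mapsto U^{-1}XU$ of $M_n(\F_q)$ preserves rank and sends $J=M_n(\F_q)R$ onto $M_n(\F_q)F_{kk}=\langle F_{kk}\rangle$, so it restricts to a bijection from the rank-$k$ matrices of $J$ onto those of $\langle F_{kk}\rangle$; thus all terms equal $m:=\#\{M\in\langle F_{kk}\rangle:rank(M)=k\}$. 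Since $\langle F_{kk}\rangle$ consists of the matrices whose last $n-k$ columns vanish, such a matrix has rank $k$ precisely when its first $k$ columns are linearly independent vectors of $\F_q^n$, and there are $(q^n-1)(q^n-q)(q^n-q^2)\cdots(q^n-q^{k-1})$ of these; hence $m=(q^n-1)(q^n-q)\cdots(q^n-q^{k-1})$.

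Finally, Theorem~\ref{numberideals} gives the number $N_k=\dfrac{(q^n-1)(q^{n-1}-1)\cdots(q^{n-k+1}-1)}{(q^k-1)(q^{k-1}-1)\cdots(q-1)}$ of left ideals of rank $k$, so the count we want is $N_k\cdot m$; replacing each factor $q^n-q^j$ in $m$ by $q^{j}(q^{n-j}-1)$ and collecting terms re-expresses this product in the form displayed in the statement. There is no deep obstacle; the one place that needs genuine care is the claim that all rank-$k$ left ideals contain equally many rank-$k$ matrices, which depends on the two standard facts that idempotent matrices of equal rank over a field are conjugate and that conjugation by an invertible matrix is rank-preserving — after that, the computation is the elementary bookkeeping already set up in the preceding pages.
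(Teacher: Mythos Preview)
Your argument is essentially the paper's own: partition the rank-$k$ matrices according to the left ideal each generates, use conjugation by an invertible $U$ to show every rank-$k$ ideal carries the same number of rank-$k$ matrices, compute that number on $\langle F_{kk}\rangle$, and multiply by the ideal count from Theorem~\ref{numberideals}. You are in fact more careful than the paper on two points: you justify the uniqueness of the ideal containing a given rank-$k$ matrix via Corollary~\ref{Igualsubs}, and you correctly describe $\langle F_{kk}\rangle$ as the matrices whose last $n-k$ \emph{columns} vanish (the paper writes ``rows'', a slip, though its numerical count is the one appropriate to columns).

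One genuine issue remains in your last sentence. Writing $q^n-q^j=q^j(q^{n-j}-1)$ gives
\[
m=(q^n-1)(q^n-q)\cdots(q^n-q^{k-1})=q^{\binom{k}{2}}\prod_{j=0}^{k-1}(q^{n-j}-1),
\]
so $N_k\cdot m$ equals $q^{\binom{k}{2}}$ times the displayed quotient, not the quotient itself. This is not a flaw in your method but in the stated formula: for $n=k=2$ the statement predicts $(q^2-1)(q-1)$ invertible $2\times2$ matrices, whereas $|GL_2(\F_q)|=q(q^2-1)(q-1)$. Your proof actually produces the correct count $q^{\binom{k}{2}}\dfrac{\bigl(\prod_{j=0}^{k-1}(q^{n-j}-1)\bigr)^2}{\prod_{j=1}^{k}(q^{j}-1)}$; only the final ``collecting terms'' claim should be amended.
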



\begin{thebibliography} {99}

\bibitem{JR} J. H. van Lint and R. M. Wilson, {\em A course in combinatorics}, Cambridge, Cambridge Univ. Press, 2001.



\end{thebibliography}
\end{document}